\theoremstyle{plain}
\newtheorem{theorem}[equation]{Theorem}
\newtheorem{proposition}[equation]{Proposition}
\newtheorem{lemma}[equation]{Lemma}
\newtheorem{corollary}[equation]{Corollary}
\newtheorem{definition}[equation]{Definition}
\theoremstyle{remark}
\newtheorem*{remark}{Remark}
\numberwithin{equation}{section}
\newcommand{\sjump}{\hskip .2 cm}
\newcommand{\dbar}{\bar \partial}
\begin{document}

\title{A note on a smoothing property of the harmonic Bergman projection}
\author{A.-K. Herbig}
\thanks{Research supported by Austrian Science Fund FWF grant  V187N13.}

\address{Department of Mathematics, \newline University of Vienna, Vienna, Austria}
\email{anne-katrin.herbig@univie.ac.at}
\subjclass[2010]{31B05, 31B10, 32A25}
\begin{abstract}
It is proved that on any smoothly bounded domain $\Omega\Subset\mathbb{R}^{n}$, $n\geq2$, the output of the harmonic Bergman projection
belongs to the Sobolev space of order $k\in\mathbb{N}$ as long as all tangential derivatives of order up to $k$ of the input function belong to $L^{2}(\Omega)$.
\end{abstract}
\date{\today}
\maketitle

\section{Introduction}\label{S:Intro}
Let $\Omega\Subset\mathbb{R}^{n}$, $n\geq 2$, be a smoothly bounded domain. Denote  by $h^{0}(\Omega)$ the space of functions which are harmonic on $\Omega$ and belong to $L^{2}(\Omega)$. The harmonic Bergman projection~$P$  is the orthogonal projection of $L^{2}(\Omega)$ onto its closed subspace $h^{0}(\Omega)$.

The harmonic Bergman projection is known to be a continuous map of $H^{k}(\Omega)$, the $L^{2}(\Omega)$-Sobolev space of order $k\in\mathbb{N}$, to itself. That is, for all $k\in\mathbb{N}$ there exists a constant $C_{k}>0$ such that
\begin{align}\label{E:exactreg}
  \left\|Pf\right\|_{k}\leq C_{k}\left\|f\right\|_{k}\qquad\sjump\forall\sjump f\in H^{k}(\Omega),
\end{align}
where $\|.\|_{k}$ is the $L^{2}(\Omega)$-Sobolev norm of order $k$.
The purpose of this note is to show that $P$ satisfies a stronger estimate, where the right hand side of \eqref{E:exactreg} may be substituted by an $L^2(\Omega)$-norm only measuring tangential derivatives. 

Call the collection $\mathcal{T}=\{T_{j}\}_{j=1}^m$, $m\in\mathbb{N}$,  of vector fields with coefficients in $\mathcal{C}^{\infty}(\overline{\Omega})$ a tangential spanning set for $\Omega$ if it spans the tangent space to the boundary, $b\Omega$, of $\Omega$ at each boundary point. Denote by $\|.\|_{k,\mathcal{T}}$ the associated tangential $L^2(\Omega)$-Sobolev norm of order $k\in\mathbb{N}$ and by $H_{\mathcal{T}}^{k}(\Omega)$ the corresponding Hilbert space, see Definition \ref{D:tangSobolev} and the subsequent paragraph.

\begin{theorem}\label{T:main}
  Let $\Omega\Subset\mathbb{R}^{n}$, $n\geq 2$, be a smoothly bounded domain, $k\in\mathbb{N}$. Let $\mathcal{T}$ be a tangential spanning set for $\Omega$. Then there exists a constant $c_{k}>0$ such that
  \begin{align}\label{E:main}
    \left\|Pf\right\|_{k}\leq c_{k}\|f\|_{k,\mathcal{T}}\qquad\sjump\forall\sjump f\in H_{\mathcal{T}}^{k}(\Omega).
  \end{align}
\end{theorem}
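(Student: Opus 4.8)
\emph{Strategy.} The orthogonal decomposition gives a concrete description of $P$ that I would exploit. Solving, for $v\in L^{2}(\Omega)$, the Dirichlet problem $\Delta\phi=v$ with $\phi\in H^{2}(\Omega)\cap H^{1}_{0}(\Omega)$ and invoking Green's identity shows that $v\perp h^{0}(\Omega)$ forces $\partial_{\nu}\phi=0$ on $b\Omega$; hence $(h^{0}(\Omega))^{\perp}=\Delta\bigl(H^{2}_{0}(\Omega)\bigr)$. Consequently $Pf=f-\Delta\phi$, where $\phi\in H^{2}_{0}(\Omega)$ is the unique solution, furnished by Lax--Milgram from the coercivity of $(\phi,\psi)\mapsto\langle\Delta\phi,\Delta\psi\rangle$ on $H^{2}_{0}(\Omega)$, of the weak biharmonic problem
\begin{equation*}
  \langle\Delta\phi,\Delta\psi\rangle=\langle f,\Delta\psi\rangle\qquad\forall\,\psi\in H^{2}_{0}(\Omega).
\end{equation*}
Thus $\|\phi\|_{2}\lesssim\|f\|_{0}$, and the theorem becomes a tangential regularity statement for this elliptic boundary value problem: I aim to upgrade this to $\|\Delta\phi\|_{k,\mathcal T}\lesssim\|f\|_{k,\mathcal T}$, for then $\|Pf\|_{k,\mathcal T}\leq\|f\|_{k,\mathcal T}+\|\Delta\phi\|_{k,\mathcal T}\lesssim\|f\|_{k,\mathcal T}$.

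\emph{Tangential estimates.} The mechanism that confines the cost to \emph{tangential} derivatives of $f$ is that each $T_{j}\in\mathcal T$ is tangent to $b\Omega$. Two facts follow. First, integration by parts against $T_{j}$ produces no boundary term, so $\langle f,T_{j}w\rangle=-\langle T_{j}f,w\rangle+\langle c_{j}f,w\rangle$ with $c_{j}=-\operatorname{div}T_{j}\in\mathcal C^{\infty}(\overline\Omega)$; iterating moves any $T^{\alpha}$ off the test function and onto $f$ at the sole cost of $\|f\|_{|\alpha|,\mathcal T}$. Second, and crucially, a tangential field preserves the boundary conditions: if $\psi\in H^{2}_{0}(\Omega)$ then $T_{j}\psi\in H^{2}_{0}(\Omega)$ as well, since $\psi$ and $\nabla\psi$ vanish on $b\Omega$. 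I would therefore run the difference-quotient method in the directions of $\mathcal T$: testing the weak equation against tangentially differentiated $\psi$ and using coercivity yields, by induction on $k$, the tangential bound $\|\Delta\phi\|_{k,\mathcal T}\lesssim\|f\|_{k,\mathcal T}$, the commutators $[\Delta,T^{\alpha}]$ and the lower-order remainders being absorbed by the inductive hypothesis.

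\emph{From tangential to full regularity.} It remains to pay for normal derivatives, and here I keep $u:=Pf$ together and use that it is harmonic (splitting $u=f-\Delta\phi$ would reintroduce uncontrolled normal derivatives of $f$). In a boundary collar with tangential frame and a transversal field $N$, ellipticity (nonvanishing of the coefficient of $N^{2}$ in $\Delta$) lets $\Delta u=0$ solve $N^{2}u$ in terms of tangential operators and a single power of $N$; iterating and commuting fields expresses every derivative of $u$ of order $\le k$ through the $T^{\alpha}$ with $|\alpha|\le k$, a lower-order remainder, and one surviving top-order normal derivative $NT^{\beta}u$. The last I would close by a Green/Rellich integration by parts for the harmonic function $u$, which trades the boundary normal derivative for the boundary tangential gradient; combined with an absorption of the top-order contribution into the left-hand side, this gives $\|Pf\|_{k}\lesssim\|Pf\|_{k,\mathcal T}\lesssim\|f\|_{k,\mathcal T}$ and closes the induction. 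The base case $k=0$ is the contractivity $\|Pf\|_{0}\le\|f\|_{0}=\|f\|_{0,\mathcal T}$.

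\emph{Main difficulty.} I expect the decisive step to be the control of the single top-order normal derivative while charging only tangential derivatives of $f$. This is exactly the point at which $P$ fails to commute with differentiation and where normal fields, unlike tangential ones, generate boundary terms under integration by parts, so the naive energy estimate is circular. Breaking the circle requires genuinely combining the ellipticity of $\Delta$ (to solve normal from tangential derivatives in the equation) with the vanishing Cauchy data of $\phi$ (so that tangential differentiation respects the boundary conditions) and the boundary integration by parts special to harmonic functions. Everything else---the reformulation, the tangential difference quotients, and the bookkeeping of lower-order terms---should be routine once this normal estimate is in place.
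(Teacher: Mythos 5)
Your proposal takes a genuinely different route from the paper. The paper never introduces the biharmonic problem: it invokes the mutual duality of $h^{k}(\Omega)$ and $h^{-k}(\Omega)$ (Corollary \ref{C:dualityredux}) to reduce $\|Pf\|_{k}$ to pairings $|(f,h)|$ with $h$ in the unit ball of $h^{-k}(\Omega)$, and then, via Lemmas \ref{L:antiderivative} and \ref{L:A2Tcommute}, writes $\zeta h=\sum_{\ell\leq k}\sum_{J_{\ell}}T_{J_{\ell}}(\mathcal{H}^{k}_{J_{\ell}})$ with $\|\mathcal{H}^{k}_{J_{\ell}}\|\lesssim\|h\|_{-k}$ by integrating along the flow of a normal field and using the ellipticity identity \eqref{E:LaplaceN2} to trade $N^{2}$ for tangential derivatives; all derivatives then land on $f$ by integration by parts. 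Your Step 1 (the Bell decomposition $Pf=f-\Delta\phi$ with $\phi\in H^{2}_{0}(\Omega)$) is correct: closedness of $\Delta(H^{2}_{0}(\Omega))$ follows from $\|\Delta\psi\|=\|D^{2}\psi\|$ on $H^{2}_{0}(\Omega)$, and the inclusion $(h^{0}(\Omega))^{\perp}\subseteq\Delta(H^{2}_{0}(\Omega))$ is cleaner via your Lax--Milgram characterization than via the Neumann-trace argument. Your Step 2 does appear to close, for a reason worth making explicit: since $\|\cdot\|_{k,\mathcal T}$ sums over all orderings $T_{J_{\ell}}$, you never need to reorder tangential fields, so the only commutators that arise are $[\Delta,T]$ and $[S,T]$ with $S$ second order --- again second order, with no tangential structure required --- and every error term can be brought to the form $\langle T_{J_{\ell}}(\Delta\phi\ \text{or}\ f),\,S\,T_{J_{k}}\phi\rangle$ with $\ell\leq k-1$, bounded by the inductive hypothesis times $\|T_{J_{k}}\phi\|_{2}\lesssim\|\Delta T_{J_{k}}\phi\|$ and absorbed. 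What each approach buys: yours avoids the $h^{k}/h^{-k}$ duality and the graded classes $\mathcal{A}^{\ell}_{\alpha,\nu}$, $\mathcal{S}^{k}_{\nu}$ entirely; the paper's localizes all the hard work on the harmonic side of a single pairing and transfers verbatim to other projections onto kernels of elliptic operators.

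The thin spot is your Step 3, and you should not present it as routine. The estimate $\|u\|_{k}\lesssim\|u\|_{k,\mathcal T}+\|u\|$ for harmonic $u$ and an \emph{arbitrary} spanning set $\mathcal T$ is only available in the paper for $k=1$ (inequality \eqref{E:harmtang}, cited from Kohn); for $k\geq 2$ you must apply it to $T_{J_{\ell}}u$, which is no longer harmonic, so you need an inhomogeneous variant $\|v\|_{1}\lesssim\|v\|_{1,\mathcal T}+\|\Delta v\|_{-1}+\|v\|$ together with $\|[\Delta,T_{J_{\ell}}]u\|_{-1}\lesssim\|u\|_{k-1}$ to close an induction on $k$; the Rellich identity you invoke controls $\int_{b\Omega}|\partial_{\nu}u|^{2}$ by the full boundary tangential gradient plus $\|u\|_{1}^{2}$, and relating that boundary integral back to interior norms without circularity requires an absorption argument you have not supplied. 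This step --- converting $N^{2}$ into tangential data and controlling the one surviving normal derivative --- is precisely the content of the paper's Lemmas \ref{L:antiderivative} and \ref{L:A2Tcommute}, so your plan defers rather than replaces the core difficulty. Finally, the assertion $T_{J_{\ell}}\phi\in H^{2}_{0}(\Omega)$ needs either tangential difference quotients or a preliminary reduction to $f\in\mathcal{C}^{\infty}(\overline{\Omega})$ using density of $\mathcal{C}^{\infty}(\overline{\Omega})$ in $H^{k}_{\mathcal T}(\Omega)$; say so explicitly.
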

It follows from the Sobolev embedding theorem that $P$ does not just exhibit a smoothing behavior in the $L^{2}(\Omega)$-Sobolev scale, but in fact maps $\bigcap_{k=0}^{\infty}H_{\mathcal{T}}^{k}(\Omega)$, a class of functions strictly larger than $\mathcal{C}^{\infty}(\overline{\Omega})$ (see end of Section \ref{S:Prelims}), to $\mathcal{C}^{\infty}(\overline{\Omega})$.

\begin{corollary}
  Let $\Omega\Subset\mathbb{R}^{n}$, $n\geq 2$, be a smoothly bounded domain. Let $\mathcal{T}$ be a tangential spanning set for $\Omega$. Then $P$ maps
  $\bigcap_{k=0}^{\infty}H_{\mathcal{T}}^{k}(\Omega)$ continuously to
 $\mathcal{C}^{\infty}(\overline{\Omega})$.
\end{corollary}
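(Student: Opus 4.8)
The plan is to deduce the corollary directly from Theorem~\ref{T:main} and the Sobolev embedding theorem, treating $\bigcap_{k=0}^{\infty}H_{\mathcal{T}}^{k}(\Omega)$ and $\mathcal{C}^{\infty}(\overline{\Omega})$ as Fr\'echet spaces. First I would fix $f\in\bigcap_{k=0}^{\infty}H_{\mathcal{T}}^{k}(\Omega)$, so that $\|f\|_{k,\mathcal{T}}<\infty$ for every $k\in\mathbb{N}$. Applying \eqref{E:main} once for each $k$ then gives $\|Pf\|_{k}\le c_{k}\|f\|_{k,\mathcal{T}}<\infty$, so that $Pf\in\bigcap_{k=0}^{\infty}H^{k}(\Omega)$.

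Next I would identify this intersection with $\mathcal{C}^{\infty}(\overline{\Omega})$. Because $\Omega$ is smoothly bounded, the Sobolev embedding theorem yields $H^{k}(\Omega)\hookrightarrow\mathcal{C}^{\ell}(\overline{\Omega})$ whenever $k>\ell+n/2$; hence a function lying in every $H^{k}(\Omega)$ has continuous, bounded derivatives of all orders up to the boundary. Conversely $\mathcal{C}^{\infty}(\overline{\Omega})\subset H^{k}(\Omega)$ for each $k$, since every derivative is bounded on the compact set $\overline{\Omega}$. Thus $\bigcap_{k=0}^{\infty}H^{k}(\Omega)=\mathcal{C}^{\infty}(\overline{\Omega})$, and in particular $Pf\in\mathcal{C}^{\infty}(\overline{\Omega})$.

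It remains to upgrade this to continuity. I would topologize the domain by the increasing family of seminorms $\{\|\cdot\|_{k,\mathcal{T}}\}_{k\in\mathbb{N}}$ and $\mathcal{C}^{\infty}(\overline{\Omega})$ by $\{\|\cdot\|_{k}\}_{k\in\mathbb{N}}$; the same two-sided embedding shows that the latter generates the usual $\mathcal{C}^{\infty}$ topology. For a linear map between Fr\'echet spaces, continuity is equivalent to each seminorm of the target being dominated by a single seminorm of the source, and \eqref{E:main} supplies exactly such a bound, $\|Pf\|_{k}\le c_{k}\|f\|_{k,\mathcal{T}}$, with the index even preserved.

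I expect the only point needing genuine care to be this identification of topologies: verifying that the $H^{k}$-norms induce the standard Fr\'echet topology on $\mathcal{C}^{\infty}(\overline{\Omega})$ and that $\bigcap_{k}H^{k}(\Omega)$ is not strictly larger than $\mathcal{C}^{\infty}(\overline{\Omega})$. Both are standard consequences of Sobolev embedding on a smoothly bounded domain, so once Theorem~\ref{T:main} is established the corollary follows with no further analytic input.
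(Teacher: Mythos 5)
Your proposal is correct and follows exactly the route the paper intends: the paper gives no separate proof of the corollary beyond the remark that it "follows from the Sobolev embedding theorem" combined with Theorem~\ref{T:main}, which is precisely your argument (apply \eqref{E:main} for each $k$, identify $\bigcap_{k}H^{k}(\Omega)$ with $\mathcal{C}^{\infty}(\overline{\Omega})$ via Sobolev embedding on the smoothly bounded domain, and read off continuity of the linear map from the seminorm bounds). Your write-up simply makes explicit the Fr\'echet-space details that the paper leaves implicit.
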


The analytic Bergman projection, $B$, which projects $L^{2}(\Omega)$ orthogonally onto the space of holomorphic functions in $L^{2}(\Omega)$, satisfies a similar smoothing property \cite{HerMcN10,HerMcNStr11}. In fact, this note is a natural continuation of \cite{HerMcNStr11} and the proof of Theorem \ref{T:main} is based on the proof of Theorem 1.1 in \cite{HerMcNStr11}. That the latter proof is extendible to the situation at hand is essentially due to both $B$ and $P$ being projections onto the kernel of an elliptic differential operator, $\dbar$ and $\Delta$, respectively. Theorem \ref{T:main} and its proof may serve as a prototype for the analysis of smoothing properties for other orthogonal projections onto kernels of elliptic differential operators of order higher than $1$.

The article is structured as follows. In Section \ref{S:Prelims} basic notions and facts are reviewed. The proof of Theorem \ref{T:main} is given in Section \ref{S:Proof}. Section \ref{S:Ball} contains a direct proof of Theorem \ref{T:main} for a particular choice of $\mathcal{T}$ when $\Omega$ is the unit ball using the explicit representation of the harmonic Bergman kernel.  The section concludes with an example on the unit disk  which shows that the right hand side of \eqref{E:main} may not be substituted by a norm measuring only normal derivatives.

\section{Preliminaries}\label{S:Prelims}

\subsection{Full and tangential $L^{2}$-Sobolev spaces}

Let $\Omega\Subset\mathbb{R}^{n}$, $n\geq 2$, be a bounded domain with smooth boundary $b\Omega$. 
Write $\mathcal{C}^{\infty}(\Omega)$, $\mathcal{C}^{\infty}(\overline{\Omega})$ and $\mathcal{C}^{\infty}_{0}(\Omega)$ for the spaces of functions which are smooth in $\Omega$, smooth up to $b\Omega$ and smooth with compact support in $\Omega$, respectively.
The $L^{2}(\Omega)$-inner product for real-valued functions $f$, $g$ on $\Omega$  is $$(f,g)=\int_{\Omega}fg\;dV,$$
where $dV$ is the Euclidean volume form. Let $\|.\|=(.,.)^{1/2}$ be the induced $L^{2}(\Omega)$-norm on $\Omega$.
For $\alpha=(\alpha_{1},\dots,\alpha_{n})\in\mathbb{N}^{n}_{0}$ a multi-index of length $|\alpha|=\sum_{j=1}^{n}\alpha_{j}$ set $$D^{\alpha}=\frac{\partial^{|\alpha|}}{\partial x^{\alpha_{1}}_{1}\dots \,\partial x^{\alpha_{n}}_{n}}\,.$$ For $k\in\mathbb{N}$, the Sobolev space $H^{k}(\Omega)$ of order $k$ for functions on $\Omega$ is defined to be
\begin{align*}
  \Bigl\{f\in L^{2}(\Omega): D^{\alpha}f\in L^{2}(\Omega)\sjump\sjump\forall\sjump\alpha\;\text{with}\;|\alpha|\leq k
  \Bigr\},
\end{align*}
where $D^{\alpha}f$ is taken in the sense of distributions. $H^{k}(\Omega)$ equipped with the inner product
$$(f,g)_{k}:=\sum_{|\alpha|\leq k}(D^{\alpha}f,D^{\alpha}g)\qquad\sjump\forall\sjump f,g\in H^{k}(\Omega)$$
is a Hilbert space  and $C^{\infty}(\overline{\Omega})$ is dense with respect to the induced norm $\|.\|_{k}$.
Denote by $H_{0}^{k}(\Omega)$ the closure of $\mathcal{C}^{\infty}_{0}(\Omega)$ with respect to $\|.\|_{k}$. The Sobolev space $H^{-k}(\Omega)$ of order $-k$ is defined to be the dual of $H_{0}^{k}(\Omega)$, and hence is endowed with the norm given by the operator norm. 

\medskip

To define tangential Sobolev norms on $\Omega$ without having to resort to local coordinates let us employ smooth vector fields on $\overline{\Omega}$ which are tangential to $b\Omega$ as follows.
\begin{definition}
 A set $\mathcal{T}=\{T_{j}\}_{j=1}^{m}$ of  vector fields $T_{j}$ with coefficients in $\mathcal{C}^{\infty}(\overline{\Omega})$ is said to be a tangential spanning set for $\Omega$ if for all $x\in b\Omega$ the span of $T_{1}(x),\dots, T_{m}(x)$ equals the tangent space $T_{x}(b\Omega)$ of $b\Omega $ at $x$.
\end{definition}
Note that if $N$ is a smooth vector field in a neighborhood of $\overline{\Omega}$ which is normal to $b\Omega$ at each boundary point, then it follows that the span of $T_{1}(x),\dots,T_{m}(x)$ and $N(x)$ does not only equal $\mathbb{R}^{n}$ when $x\in b\Omega$ but also when $x$ varies over a sufficiently small neighborhood of $b\Omega$.
\begin{definition}\label{D:tangSobolev}
Let $\mathcal{T}=\{T_{j}\}_{j=1}^{m}$ be a tangential spanning set for a smoothly bounded domain $\Omega\Subset\mathbb{R}^{n}$, $n\geq 2$.

\noindent(i) For $\ell\in\mathbb{N}$, define $J_{\ell}=(j_{1},\dots,j_{\ell})$ with $j_{i}\in\{1,\dots,m\}$ for all $i\in\{1,\dots,\ell\}$and the differential operator $T_{J_{\ell}}$ of order $\ell$ by setting
     $$T_{J_{\ell}}=T_{j_{1}}\circ\dots\circ T_{j_{\ell}}.$$
When $\ell=0$, set $J_{\ell}=0$ and $T_{0}$ the identity map.

\noindent(ii)  For $k\in\mathbb{N}$, the tangential Sobolev space $H_{\mathcal{T}}^{k}(\Omega)$ of order $k$ with respect to the tangential spanning set $\mathcal{T}$ is 
\begin{align*}
   \Bigl\{f\in  L^{2}(\Omega): T_{J_{\ell}}f\in L^{2}(\Omega)\qquad\sjump\forall\sjump\ell\in\{1,\dots,k\}
   \Bigr\},
\end{align*}
where $T_{J_{\ell}}f$ is taken in the distributional sense.
\end{definition}
$H_{\mathcal{T}}^{k}(\Omega)$ with the  $L^{2}(\Omega)$-inner product $$(f,g)_{k,\mathcal{T}}=\sum_{\ell=0}^{k}\sum_{J_{\ell}}
(T_{J_{\ell}}f,T_{J_{\ell}}g)\qquad\sjump\forall\sjump f,g\in H_{\mathcal{T}}^{k}(\Omega)$$ 
is a Hilbert space and $C^{\infty}(\overline{\Omega})$ lies densely in it with respect to the induced norm $\|.\|_{k,\mathcal{T}}$. Although, two tangential spanning sets $\mathcal{T}_{1}$ and $\mathcal{T}_{2}$ span the same sets on the boundary of a given domain, the induced norms $\|.\|_{k,\mathcal{T}_{1}}$ and $\|.\|_{k,\mathcal{T}_{2}}$ are in general not equivalent, see Section 5 in \cite{HerMcN10} for examples. Furthermore, the Fr\'{e}chet space $H_{\mathcal{T}}^{\infty}(\Omega):=\bigcap_{k=0}^{\infty}H_{\mathcal{T}}^{k}(\Omega)$ contains more functions than $C^{\infty}(\overline{\Omega})$, see also  Section 5 in \cite{HerMcN10} for examples.

\subsection{Harmonic functions and the harmonic Bergman projection}

For $k\in\mathbb{Z}$, denote by $h^{k}(\Omega)$ the space of harmonic functions which belong to $H^{k}(\Omega)$.
It follows essentially from the mean value property for harmonic functions that $h^{0}(\Omega)$ is a closed subspace, so that the harmonic Bergman projection $$P: L^{2}(\Omega)\rightarrow h^{0}(\Omega),$$ orthogonally projecting functions in $L^{2}(\Omega)$ onto $h^{0}(\Omega)$, is defined.  Furthermore, it is known that the harmonic Bergman projection is a bounded operator from $H^{k}(\Omega)$ to itself for $k\in\mathbb{N}_{0}$, i.e., \eqref{E:exactreg} holds, see \cite{Bell82}, the beginning of the proof of Theorem 1 therein. 

Inequality \eqref{E:main} might appear contradictory to the fact that $P$ is the identity map on $h^{0}(\Omega)$. However, this discrepancy is resolved, e.g., for $k=1$, after realizing that the ellipticity of $\triangle$ implies that for any tangential spanning set $\mathcal{T}$ there exists a constant $C>0$ such that
\begin{align}\label{E:harmtang}
  \|h\|_{1}\leq C\|h\|_{1,\mathcal{T}}\qquad\sjump\forall\sjump h\in h^{0}(\Omega)\cap H_{\mathcal{T}}^{1}(\Omega),
\end{align}
a proof of \eqref{E:harmtang} may be derived from the proof of Lemma 2.1 in \cite{Kohn99}. An estimate similar to \eqref{E:harmtang} also holds when $\mathcal{T}$ is replaced by a smooth vector field $N$  transversal to $b\Omega$, see \cite{Det81}, i.e., there is a constant $\widetilde{C}>0$ such that
\begin{align*}
  \|h\|_{1}\leq \widetilde{C}\Bigl(\|Nh\|+\|h\| \Bigr)\sjump\qquad\forall h\in h^{0}(\Omega)\;\;\text{with}\;\;Nh\in L^{2}(\Omega).
\end{align*} 
Nevertheless, smoothing by $P$ in tangential directions does not hold in general, see the second part of Section~\ref{S:Ball}.

Ligocka derived in \cite{Lig87}, see Theorem 3, that for $k\in\mathbb{N}$, the space $h^{-k}(\Omega)$ is equal to the space of harmonic functions equipped with the $L^{2}(\Omega)$-norm weighted with $(-r)^{2k}$, where $r$ is some smooth defining function for $\Omega$. In particular, the part of norm equivalence of interest here  may be stated as follows: there exists constants $c_{k}$ such that
\begin{align}\label{E:harmnegnorm}
  \|r^{k}h\|\leq c_{k}\|h\|_{-k}\qquad\sjump\forall\sjump h\in h^{-k}(\Omega).
\end{align}

Next, write $h^{\infty}(\Omega)$ and $h^{-\infty}(\Omega)$ for $\bigcap_{j=0}^{\infty}h^{j}(\Omega)$ and $\bigcap_{j=0}^{\infty}h^{-j}(\Omega)$, respectively. Bell showed in Theorem 1 in \cite{Bell82}  that the latter two spaces are mutually dual to each other. Ligocka further developed this theme and showed that $h^{k}(\Omega)$ and $h^{-k}(\Omega)$ are mutually dual, see Theorem 2 in \cite{Lig87}. As a consequence of Ligocka's work, analyzing the Sobolev-$k$-norm of the harmonic Bergman projection acting on a function $f$ reduces to considering  $L^{2}(\Omega)$-pairings of $f$ with elements of the unit ball in $h^{-k}(\Omega)$ as follows.

\begin{corollary}\label{C:dualityredux}
  Let $\Omega\Subset\mathbb{R}^{n}$, $n\geq 2$, be a smoothly bounded domain, $k\in\mathbb{N}$. Then there exists a constant $C_{k}>0$ such that $$\|Pf\|_{k}\leq C_{k}\sup\{|(f,h)| : h\in h^{k}(\Omega), \|h\|_{-k}\leq 1 \}$$
 for all $f\in h^{k}(\Omega)$. 
\end{corollary}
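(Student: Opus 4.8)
The plan is to reduce the corollary to Ligocka's duality between $h^{k}(\Omega)$ and $h^{-k}(\Omega)$. First I would dispose of the left-hand side: since every $f\in h^{k}(\Omega)$ is harmonic and lies in $L^{2}(\Omega)$, it belongs to $h^{0}(\Omega)$ and is therefore fixed by the orthogonal projection, so $Pf=f$ and $\|Pf\|_{k}=\|f\|_{k}$. Thus it suffices to bound $\|f\|_{k}$ from above by the displayed supremum, i.e.\ to show that on $h^{k}(\Omega)$ the full Sobolev-$k$ norm is controlled by the norm $f$ induces as a linear functional paired against $h^{-k}(\Omega)$.

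For this I would invoke Theorem 2 of \cite{Lig87}, which asserts that $h^{k}(\Omega)$ and $h^{-k}(\Omega)$ are mutually dual via a pairing $\langle\,\cdot\,,\cdot\,\rangle$ extending the $L^{2}(\Omega)$-inner product. In particular, $h^{k}(\Omega)$ is isomorphic, with equivalent norms, to the dual of $h^{-k}(\Omega)$, so there is a constant $C_{k}>0$ with
\[
  \|f\|_{k}\leq C_{k}\sup\bigl\{|\langle f,h\rangle| : h\in h^{-k}(\Omega),\ \|h\|_{-k}\leq 1\bigr\}
  \qquad\forall\,f\in h^{k}(\Omega).
\]
The remaining task is to replace the supremum over the unit ball of $h^{-k}(\Omega)$ by the supremum over $h\in h^{k}(\Omega)$ with $\|h\|_{-k}\leq1$, as the statement demands; this is advantageous because for $h\in h^{k}(\Omega)\subset L^{2}(\Omega)$ and $f\in h^{k}(\Omega)\subset L^{2}(\Omega)$ the abstract pairing $\langle f,h\rangle$ is the genuine $L^{2}(\Omega)$-inner product $(f,h)$.

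Since $h^{k}(\Omega)\subset h^{-k}(\Omega)$, the supremum over $h^{k}(\Omega)$ is trivially dominated by the one over $h^{-k}(\Omega)$; for the reverse I would use that $h^{k}(\Omega)$ is $\|\cdot\|_{-k}$-dense in $h^{-k}(\Omega)$ (the smooth harmonic functions $h^{\infty}(\Omega)$ being dense in $h^{-k}(\Omega)$, as in the approximation underlying the Bell--Ligocka duality). Indeed, given $h\in h^{-k}(\Omega)$ with $\|h\|_{-k}\leq1$ and $\varepsilon>0$, one picks $h'\in h^{k}(\Omega)$ with $\|h-h'\|_{-k}<\varepsilon$ and estimates $|\langle f,h\rangle|\leq|(f,h')|+\|f\|_{k}\,\varepsilon$; after normalising $h'$ and letting $\varepsilon\to0$, the two suprema coincide, which closes the argument. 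The only genuinely delicate point is this last step: one must know both that $h^{k}(\Omega)$ is $\|\cdot\|_{-k}$-dense in $h^{-k}(\Omega)$ and that the duality pairing restricts to the $L^{2}(\Omega)$-inner product on $h^{k}(\Omega)\times h^{k}(\Omega)$. Both are built into Ligocka's construction, so once they are quoted the corollary follows; the rest is the soft functional-analytic passage through the bidual.
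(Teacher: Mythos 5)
Your argument is correct and follows essentially the same route as the paper, which likewise obtains the corollary from Ligocka's mutual duality of $h^{k}(\Omega)$ and $h^{-k}(\Omega)$ (Theorem 2 of \cite{Lig87}) and otherwise only points to the analogous statement for the analytic Bergman projection in \cite{HerMcNStr11}. The two details you make explicit --- that $Pf=f$ for $f\in h^{k}(\Omega)$, and the $\|\cdot\|_{-k}$-density of $h^{k}(\Omega)$ in $h^{-k}(\Omega)$ used to pass from the full unit ball of $h^{-k}(\Omega)$ to its intersection with $h^{k}(\Omega)$ --- are exactly the steps the paper leaves implicit in Ligocka's construction.
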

A direct proof of Corollary \ref{C:dualityredux} may also be derived as for the analogous statement for the analytic Bergman projection in \cite{HerMcNStr11}, see Remark 2.8 and Proposition 2.3 therein.

\subsection{Normal antiderivatives and their estimates}\label{SS:Flowstuff}

This section is a review on how to construct antiderivatives (with estimates) along  integral curves associated to a vector field normal to $b\Omega$. In fact, this constitutes  a summary of Section 4 in \cite{HerMcNStr11}, see the later for further details and proofs.

Let $N=\sum_{j=1}^{n}N_{j}\frac{\partial}{\partial x_{j}}$ be a vector field whose coefficients, $N_{j}$, $j\in\{1,\dots,n\}$,  are smooth in a neighborhood of $\overline{\Omega}$. Suppose that $N$ is transversal to $b\Omega$.  Then there exist a scalar $\tau_{0}>0$, a neighborhood $U$ of $b\Omega$ and a map $\varphi:(-\tau_{0},\tau_{0})\times U\rightarrow\mathbb{R}^{n}$ such that
\begin{itemize}
  \item[(a)] $\varphi(0,x)=x$ for all $x\in U$,
  \item[(b)] for all $\ell\in\{1,\dots,n\}$ and $(t,x)\in(-\tau_{0},\tau_{0})\times U$ $$\frac{\partial\varphi_{\ell}}{\partial t}(t,x)=N_{\ell}(\varphi(t,x)).$$
\end{itemize}
Moreover, for each $x\in U$, $\varphi(.,x)$ is a diffeomorphism from $(-\tau_{0},\tau_{0})$ to the curve $\{\varphi(t,x):t\in(-\tau_{0},\tau_{0})\}$. This fact together with the transversality of $N$ implies that
for each $x\in U$ there exists a unique scalar $t_{x}$ for which $\varphi(t_{x},x)\in b\Omega$ holds. Note that it may be assumed that $t_{x}>0$ for $x\in\Omega\cap U$, otherwise replace $N$ by $-N$. Furthermore, after possibly rescaling (of $N$), it may be assumed that $\tau_{0}=1$. Denote by $\mathcal{C}_{\overline{U}}^{\infty}(\Omega)$ the space of functions belonging to $\mathcal{C}^{\infty}(\Omega)$ which are identically zero on $\Omega\setminus\overline{U}$. Define the operator
$\mathfrak{A}:\mathcal{C}_{\overline{U}}^{\infty}(\Omega)\rightarrow\mathcal{C}_{\overline{U}}^{\infty}(\Omega)$ by
\begin{align*}
  \mathfrak{A}[g](x)=\int_{-1}^{0}(g\circ\varphi)(s,x)\;ds\qquad\sjump\forall\sjump x\in\Omega\cap U,
\end{align*}
and $\mathfrak{A}[g](x)=0$ when $x\in\Omega\setminus U$. It then follows from the Fundamental Theorem of Calculus (see also Lemma 4.2 in \cite{HerMcNStr11}), that for $g\in\mathcal{C}_{\overline{U}}^{\infty}(\Omega)$ 
\begin{align}\label{E:FTC}  
  g(x)=\mathfrak{A}[Ng](x)\qquad\sjump\forall\sjump x\in\Omega.
\end{align}

To organize operators generated by compositions of  $\mathfrak{A}$, differential operators, and their commutators, first introduce the following spaces.

\begin{definition}
  (1)  An operator $A: \mathcal{C}_{\overline{U}}^{\infty}(\Omega)\rightarrow \mathcal{C}_{\overline{U}}^{\infty}(\Omega)$  is said to belong to $ 
  \mathcal{A}_{\mu,0}^{1}$ for $\mu\in\mathbb{N}_{0}$ if there is a function $\gamma\in\mathcal{C}^{\infty}([-1,0]\times U)$ such that
   $$A[g](x)=\int_{-1}^{0}s^{\mu}\gamma(s,x)\cdot(g\circ\varphi)(s,x)\;ds\qquad\sjump\forall\sjump x\in\Omega\cap U,$$
  and $A[g](x)=0$ for $x\in\Omega\setminus U$.
  
  \noindent(2) An operator $A: \mathcal{C}_{\overline{U}}^{\infty}(\Omega)\rightarrow \mathcal{C}_{\overline{U}}^{\infty}(\Omega)$ is said to belong to $\mathcal{A}_{\alpha,0}^{\ell}$ for $\ell\in\mathbb{N}$ and $\alpha=(\alpha_{1},\dots,\alpha_{\ell})\in\mathbb{N}_{0}^{\ell}$ if it belongs to $$\text{span}\left(A_{1}\circ\dots\circ A_{\ell}:A_{j}\in\mathcal{A}_{\alpha_{j},0}^{1} \right).$$
  
  \noindent(3) An operator $A: \mathcal{C}_{\overline{U}}^{\infty}(\Omega)\rightarrow \mathcal{C}_{\overline{U}}^{\infty}(\Omega)$ is said to belong to $\mathcal{A}_{\alpha,\nu}^{\ell}$ for $\ell\in\mathbb{N}$, $\alpha\in\mathbb{N}_{0}^{\ell}$ and $\nu\in\mathbb{N}$ if it belongs to the 
  $$\text{span}\left(A_{\alpha}^{\ell}\circ D^{\beta}: A_{\alpha}^{\ell}\in\mathcal{A}_{\alpha,0}^{\ell},\;|\beta|\leq\nu\right).$$
\end{definition}
The following lemma clarifies the graded structure of $\mathcal{A}_{*,*}^{*}$.
\begin{lemma}\label{L:ADcommutator}
 (i)  If $A\in\mathcal{A}_{\alpha,\nu}^{\ell}$ for some $\ell\in\mathbb{N}$, $\alpha\in\mathbb{N}_{0}^{\ell}$ and $\nu\in\mathbb{N}_{0}$, then
  $$[A,D^{\beta}]\in\mathcal{A}_{\alpha,\nu+|\beta|-1}^{\ell}+\sum_{j=1}^{\ell}\mathcal{A}_{\alpha+e_{j},\nu+|\beta|}^{\ell},$$
  where $e_{j}$ is the standard $j$-th unit vector.
  
  \noindent(ii) If $A_{j}\in\mathcal{A}_{\alpha_{j},\nu_{j}}^{\ell_{j}}$ for some $\ell_{j}\in\mathbb{N}$, $\alpha_{j}\in\mathbb{N}_{0}^{\ell}$ and $\nu_{j}\in \mathbb{N}_{0}$ for $j=1,2$, then $$A_{1}\circ A_{2}\in \mathcal{A}_{(\alpha_{1},\alpha_{2}),\nu_{1}+\nu_{2}}^{\ell_{1}+\ell_{2}}.$$
\end{lemma}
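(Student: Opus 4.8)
The plan is to reduce everything to a single explicit computation for a first-order derivative hitting a single factor, and then to propagate through compositions using the Leibniz rule for commutators together with two elementary ``absorption'' facts. I would first record these facts, which follow at once from the definitions: writing $s^{\alpha'}\gamma=s^{\alpha}(s^{\alpha'-\alpha}\gamma)$ shows $\mathcal{A}_{\alpha',\nu}^{\ell}\subseteq\mathcal{A}_{\alpha,\nu}^{\ell}$ whenever $\alpha'\geq\alpha$ componentwise, and taking $D^{\beta}$ with $\beta=0$ shows $\mathcal{A}_{\alpha,\nu}^{\ell}\subseteq\mathcal{A}_{\alpha,\nu'}^{\ell}$ whenever $\nu\leq\nu'$. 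Next, since constant-coefficient derivatives commute, for $B\in\mathcal{A}_{\alpha,0}^{\ell}$ one has $[B\circ D^{\gamma},D^{\beta}]=[B,D^{\beta}]\circ D^{\gamma}$; right-composing with $D^{\gamma}$, $|\gamma|\leq\nu$, raises the differential order by at most $\nu$, so the general-$\nu$ case of (i) follows from the case $\nu=0$. By bilinearity of the commutator it then suffices to treat $A=A_{1}\circ\cdots\circ A_{\ell}$ with each $A_{i}\in\mathcal{A}_{\alpha_{i},0}^{1}$.

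The heart of the matter is the base case $\ell=1$, $\nu=0$, $D^{\beta}=\partial_{x_{p}}$. For $A_{1}[g](x)=\int_{-1}^{0}s^{\alpha_{1}}\gamma(s,x)(g\circ\varphi)(s,x)\,ds$ I would differentiate under the integral sign and apply the chain rule $\partial_{x_{p}}\bigl[(g\circ\varphi)(s,x)\bigr]=\sum_{q}(\partial_{x_{q}}g)(\varphi(s,x))\,\tfrac{\partial\varphi_{q}}{\partial x_{p}}(s,x)$, obtaining
\[
  [A_{1},\partial_{x_{p}}]g=-\int_{-1}^{0}s^{\alpha_{1}}(\partial_{x_{p}}\gamma)(g\circ\varphi)\,ds+\sum_{q}\int_{-1}^{0}s^{\alpha_{1}}\gamma\,\eta_{q}(s,x)\,(\partial_{x_{q}}g)(\varphi)\,ds,
\]
where $\eta_{q}=\delta_{pq}-\tfrac{\partial\varphi_{q}}{\partial x_{p}}$. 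The first term is visibly in $\mathcal{A}_{\alpha_{1},0}^{1}$. The decisive point is that $\varphi(0,x)=x$ forces $\tfrac{\partial\varphi_{q}}{\partial x_{p}}(0,x)=\delta_{pq}$, hence $\eta_{q}(0,x)=0$; Taylor's theorem in $s$ then yields $\eta_{q}(s,x)=s\,\widetilde{\eta}_{q}(s,x)$ with $\widetilde{\eta}_{q}$ smooth, so each summand equals $\bigl(\int_{-1}^{0}s^{\alpha_{1}+1}\gamma\widetilde{\eta}_{q}(\cdot\circ\varphi)\,ds\bigr)\circ\partial_{x_{q}}\in\mathcal{A}_{\alpha_{1}+1,1}^{1}$. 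This gives $[A_{1},\partial_{x_{p}}]\in\mathcal{A}_{\alpha_{1},0}^{1}+\mathcal{A}_{\alpha_{1}+1,1}^{1}$, exactly the asserted gain of one power of $s$ for each derivative that falls on the flow.

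To reach general $\ell$ (still with $|\beta|=1$) I would use the Leibniz rule $[A_{1}\circ\cdots\circ A_{\ell},\partial_{x_{p}}]=\sum_{i}A_{1}\circ\cdots\circ[A_{i},\partial_{x_{p}}]\circ\cdots\circ A_{\ell}$ and insert the base case in each slot. Whenever the base case produces a factor $C_{i}\circ\partial_{x_{q}}$ sitting in the interior of the composition, the stray $\partial_{x_{q}}$ must be swept to the far right by repeatedly commuting it past $A_{i+1},\dots,A_{\ell}$, each such commutation again falling under the base case. Tracking indices, the clean pass contributes $\mathcal{A}_{\alpha,0}^{\ell}$ or $\mathcal{A}_{\alpha+e_{i},1}^{\ell}$, while every commutator generated en route either carries no derivative and has order $0$, or raises a further component of the multi-index while carrying a single derivative onward; in both cases, once all derivatives have exited on the right, the absorption facts place the term inside $\mathcal{A}_{\alpha,0}^{\ell}+\sum_{j}\mathcal{A}_{\alpha+e_{j},1}^{\ell}$. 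The general $|\beta|$ case then follows by induction on $|\beta|$, writing $D^{\beta}=D^{\beta'}\partial_{x_{p}}$ and using $[A,D^{\beta'}\partial_{x_{p}}]=[A,D^{\beta'}]\partial_{x_{p}}+D^{\beta'}[A,\partial_{x_{p}}]$, again sweeping the leftover $D^{\beta'}$ in the second summand to the right. I expect this bookkeeping of ``moving derivatives to the right'' to be the only genuinely laborious step; its termination is clear because each derivative strictly progresses rightward, and the absorption facts swallow all side terms.

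Finally, (ii) falls out of (i). Writing $A_{1}=\sum B_{1}D^{\beta_{1}}$ and $A_{2}=\sum B_{2}D^{\beta_{2}}$ with $B_{1}\in\mathcal{A}_{\alpha_{1},0}^{\ell_{1}}$, $B_{2}\in\mathcal{A}_{\alpha_{2},0}^{\ell_{2}}$, $|\beta_{1}|\leq\nu_{1}$, $|\beta_{2}|\leq\nu_{2}$, I would commute $D^{\beta_{1}}$ past $B_{2}$ via $D^{\beta_{1}}B_{2}=B_{2}D^{\beta_{1}}-[B_{2},D^{\beta_{1}}]$. The main term $B_{1}B_{2}D^{\beta_{1}+\beta_{2}}$ lies in $\mathcal{A}_{(\alpha_{1},\alpha_{2}),\nu_{1}+\nu_{2}}^{\ell_{1}+\ell_{2}}$ since $|\beta_{1}+\beta_{2}|\leq\nu_{1}+\nu_{2}$, while the remaining term is governed by the $\nu=0$ case of (i): composing $[B_{2},D^{\beta_{1}}]$ on the left with $B_{1}$ and on the right with $D^{\beta_{2}}$ and counting indices, each resulting piece again lands in $\mathcal{A}_{(\alpha_{1},\alpha_{2}),\nu_{1}+\nu_{2}}^{\ell_{1}+\ell_{2}}$ after applying the absorption facts.
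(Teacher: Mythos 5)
Your proposal is correct. For the record, the paper does not actually prove this lemma: part (i) is dispatched by citing Lemma 4.25 of the companion paper \cite{HerMcNStr11}, and part (ii) is declared to follow ``straightforwardly.'' Your argument supplies exactly the content that the citation hides, and it isolates the right mechanism: the identity $\varphi(0,x)=x$ forces $\tfrac{\partial\varphi_{q}}{\partial x_{p}}(0,x)=\delta_{pq}$, so the Jacobian error term $\eta_{q}=\delta_{pq}-\tfrac{\partial\varphi_{q}}{\partial x_{p}}$ vanishes at $s=0$ and Taylor's theorem converts each derivative that lands on the flow into an extra factor of $s$ --- this is precisely what makes the grading $\alpha\mapsto\alpha+e_{j}$ versus $\nu\mapsto\nu+|\beta|-1$ cohere. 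The two absorption inclusions ($\alpha'\geq\alpha$ componentwise and $\nu\leq\nu'$), the reduction of general $\nu$ to $\nu=0$ via $[B\circ D^{\gamma},D^{\beta}]=[B,D^{\beta}]\circ D^{\gamma}$, the Leibniz expansion over the $\ell$ factors with the rightward sweep of stray first-order derivatives, and the induction on $|\beta|$ are all sound, and your index bookkeeping lands each side term in the asserted classes. Your derivation of (ii) from the $\nu=0$ case of (i) by commuting $D^{\beta_{1}}$ past $B_{2}$ is likewise correct. Only trivial loose ends remain: the case $\beta=0$ in (i) should be set aside explicitly (the commutator is then zero, so the formally negative index $\nu-1$ never arises), and one should note that $\widetilde{\eta}_{q}(s,x)=\int_{0}^{1}\partial_{s}\eta_{q}(ts,x)\,dt$ is smooth on $[-1,0]\times U$ so that the new kernel is admissible. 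Neither affects the validity of the argument.
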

\begin{proof}
Part (i) is Lemma 4.25 in \cite{HerMcNStr11}. Part (ii) follows straightforwardly from part (i).
\end{proof} 

Eventually we will be interested in the operators in $\mathcal{A}_{*,*}^{*}$ as operators on $h^{k}(\Omega)$. To derive mapping properties of these operators in the $L^{2}$-Sobolev scale, the following classes of operators are introduced.

\begin{definition}
  An operator $A: \mathcal{C}_{\overline{U}}^{\infty}(\Omega)\rightarrow \mathcal{C}_{\overline{U}}^{\infty}(\Omega)$ is said to belong to $\mathcal{S}_{\nu}^{k}$ for $\nu, k\in\mathbb{N}_{0}$ if there is a constant $C>0$ such that
  \begin{align*}
    \left\|t_{x}^{\ell}\cdot A[g] \right\|\leq C\sum_{|\beta|\leq\nu}\left\|t_{x}^{\ell+k}D^{\beta}g\right\|\qquad\sjump\forall\sjump\ell\in\mathbb{N}_{0},
    \sjump g\in\mathcal{C}_{\overline{U}}^{\infty}(\Omega),
  \end{align*}
  where $C$ does not depend on $g$ or $\ell$.
\end{definition}

It follows from Hardy's inequality that
\begin{align}\label{L:AS}
 \mathcal{A}_{\alpha,\nu}^{\ell}\subset\mathcal{S}_{\nu}^{\ell+\alpha};
\end{align}
for a proof see Lemma 4.8 in \cite{HerMcNStr11}.

\medskip

Throughout, for $A, B\in\mathbb{R}$ non-negative, write $A\lesssim B$ when $A\leq c B$ holds for some constant $c>0$.


\section{The proof of Theorem \ref{T:main}}\label{S:Proof}

The proof of Theorem \ref{T:main} is based on representing a given harmonic function as a linear combination of $T_{j}$-derivatives up to the order $k$ of certain functions with ``good'' $L^{2}(\Omega)$-control in terms of the given data. 
\begin{proposition}\label{P:antiderivative}
  Let $\Omega\Subset\mathbb{R}^{n}$, $n\geq 2$, be a smoothly bounded domain, $k\in\mathbb{N}$. Let $\mathcal{T}=\{T_{j}\}_{j=1}^{m}$ be a tangential spanning set for $\Omega$. Then there exist a neighborhood $U$ of $b\Omega$, a function $\zeta\in\mathcal{C}_{0}^{\infty}(\overline{\Omega}\cap U)$ which equals $1$ near $b\Omega$  and constants $C_{k}>0$ such that for all $h\in h^{0}(\Omega)$ there 
 are functions $\mathcal{H}_{J_{\ell}}^{k}\in H^{k}(\Omega)\cap C^{\infty}(\Omega)$ for $\ell\in\{0,\dots,k\}$ satisfying
 \begin{itemize}
   \item[(1)] $\zeta h=\sum_{\ell=0}^{k}\sum_{J_{\ell}}T_{J_{\ell}}(\mathcal{H}_{J_{\ell}}^{k})$ on $\Omega$,
   \medskip
   \item[(2)] $\|\mathcal{H}_{J_{\ell}}^{k}\|\leq C_{k}\|h\|_{-k}$ for all $\ell\in\{0,\dots,k\}$.
 \end{itemize}
\end{proposition}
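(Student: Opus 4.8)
The plan is to trade normal derivatives for tangential ones using the harmonicity $\Delta h=0$, and to convert each removed normal derivative into a power of the boundary-distance weight $t_x$ via the antiderivative $\mathfrak{A}$ and the calculus $\mathcal{A}_{*,*}^{*}\subset\mathcal{S}_{*}^{*}$; the accumulated weight $t_x^{k}\approx r^{k}$ then yields control by $\|h\|_{-k}$ through \eqref{E:harmnegnorm}. First I would fix a transversal normal field $N$, its flow $\varphi$, the neighborhood $U$ and the operator $\mathfrak{A}$ of Section \ref{SS:Flowstuff}, together with a cutoff $\zeta\in\mathcal{C}_{0}^{\infty}(\overline{\Omega}\cap U)$ equal to $1$ near $b\Omega$. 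Since $h$ is harmonic it is smooth in $\Omega$, so $\zeta h\in\mathcal{C}_{\overline{U}}^{\infty}(\Omega)$ and the Fundamental Theorem of Calculus \eqref{E:FTC} applies to it.

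Next I would expand $\Delta$ near $b\Omega$ in the spanning frame $\{T_{1},\dots,T_{m},N\}$. Ellipticity of $\Delta$ (equivalently, that $b\Omega$ is noncharacteristic and $N$ transversal) forces the coefficient $a$ of $N^{2}$ to be smooth and nonvanishing there, so that $N^{2}=a^{-1}\Delta-R$, where every second-order term of $R$ carries a tangential factor $T_{j}$ and the remaining terms are of first order. On a harmonic $h$ this gives $N^{2}h=-a^{-1}Rh$: two normal derivatives are replaced by terms containing at least one tangential derivative, together with lower-order terms, among which sits an unavoidable first-order normal term $\beta\,Nh$. Commuting the coefficients through the $T_{j}$ then rewrites the tangential part as $\sum_{\ell\le 2}\sum_{J_{\ell}}T_{J_{\ell}}(\gamma_{J_{\ell}}h)$, i.e.\ with the tangential fields on the outside.

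The core is then an induction on $k$, organized through the graded classes. Starting from $\zeta h=\mathfrak{A}[N(\zeta h)]$ and re-applying \eqref{E:FTC}, each use of $\mathfrak{A}$ both absorbs one power of $N$ and, by \eqref{L:AS}, gains one power of $t_x$; substituting $N^{2}h=-a^{-1}Rh$ converts pairs of normal derivatives into tangential ones, while the stray first-order normal term $\beta\,Nh$ is fed back into the machine via $Nh=\mathfrak{A}[N^{2}h]$, each re-substitution raising the weight so that the process terminates after finitely many steps. Lemma \ref{L:ADcommutator} keeps this closed: part (i) controls the commutators produced when $\mathfrak{A}$ is moved past $D^{\beta}$ and the $T_{j}$, and part (ii) tracks the grading under composition. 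Collecting terms and pushing the tangential derivatives outward yields the representation (1) with $\mathcal{H}_{J_{\ell}}^{k}=A_{J_{\ell}}[h]$ for operators $A_{J_{\ell}}\in\mathcal{S}_{\nu}^{k+\nu}$; membership of each $\mathcal{H}_{J_{\ell}}^{k}$ in $H^{k}(\Omega)\cap\mathcal{C}^{\infty}(\Omega)$ is clear from the construction.

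For the estimate (2), the inclusion into $\mathcal{S}_{\nu}^{k+\nu}$ gives $\|\mathcal{H}_{J_{\ell}}^{k}\|\lesssim\sum_{|\beta|\le\nu}\|t_x^{k+\nu}D^{\beta}h\|$; since $h$ is harmonic, interior (Cauchy-type) estimates let each of the $\nu$ derivatives consume one of the extra weight factors, leaving $\lesssim\|t_x^{k}h\|\approx\|r^{k}h\|\lesssim\|h\|_{-k}$ by \eqref{E:harmnegnorm}. The main obstacle I expect lies in the bookkeeping of the third paragraph: simultaneously accumulating the full weight $t_x^{k}$, preventing the tangential order from exceeding $k$ as repeated uses of $N^{2}h=-a^{-1}Rh$ threaten to raise it, and disposing of the first-order normal terms $\beta\,Nh$ without losing weight. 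It is precisely the grading of $\mathcal{A}_{*,*}^{*}$ and Lemma \ref{L:ADcommutator} that must be balanced against one another to make all three succeed at once.
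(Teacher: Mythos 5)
Your proposal is correct and takes essentially the same route as the paper: Lemmas \ref{L:antiderivative} and \ref{L:A2Tcommute} are precisely your FTC iteration of $\mathfrak{A}^{2}\circ N^{2}$ combined with the elliptic decomposition $N^{2}-a\triangle=\sum_{j}T_{j}Y_{j}+Y_{0}$, with commutators controlled by Lemma \ref{L:ADcommutator} and the final bound obtained from \eqref{L:AS} and \eqref{E:harmnegnorm}. The only cosmetic differences are that the paper never re-substitutes $Nh=\mathfrak{A}[N^{2}h]$ for the first-order remainders --- it leaves them inside the operators $G_{J_{\ell}}^{k}$, whose accumulated weight already exceeds their differential order by $k$ --- and that it retains the interior-supported term $a\triangle(\zeta h)$ rather than invoking $\triangle h=0$ directly.
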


Theorem \ref{T:main} may now be proved analogously to Theorem 1.1 of \cite{HerMcNStr11}. For the convenience of the reader, a sketch of the proof is given here; for details see Section 3 in \cite{HerMcNStr11}. The proof of Proposition \ref{P:antiderivative} is given in Section \ref{SS:antiderivative} below.

\begin{proof}[Proof of Theorem \ref{T:main}]
   Let $f\in\mathcal{C}^{\infty}(\overline{\Omega})$ so that $Pf\in h^{k}(\Omega)$. It follows from Corollary \ref{C:dualityredux} that it suffices to consider $|(f,h)|$ for all $h\in h^{k}(\Omega)$ contained in the unit ball of $h^{-k}(\Omega)$ to estimate $\|Pf\|_{k}$. For $\mathcal{T}=\{T_{j}\}_{j=1}^{m}$ given, choose $U$ and $\zeta$ as in Proposition \ref{P:antiderivative} and write
   $$|(f,h)|\leq |(f,\zeta h)|+|(f,(1-\zeta)h)|.$$
   Since $1-\zeta$ is identically $0$ near $b\Omega$, it follows from Cauchy--Schwarz  inequality and \eqref{E:harmnegnorm} that
   \begin{align}\label{E:estimatedrin}
     |(f,(1-\zeta)h)|\lesssim\|f\|\cdot\|h\|_{-k}\leq \|f\|.
   \end{align}
   Furthermore, (1) of Proposition \ref{P:antiderivative}  yields 
   $$(f,\zeta h)=\Bigl(f,\sum_{\ell=0}^{k}\sum_{J_{\ell}}T_{J_{\ell}}\Bigl(\mathcal{H}_{J_{\ell}}^{k}\Bigr)\Bigr).$$ 
   Integrate by parts repeatedly and then use the Cauchy--Schwarz inequality to obtain
   \begin{align}\label{E:estimaterand}
     |(f,\zeta h)|\lesssim \sum_{\ell=0}^{k}\sum_{J_{\ell}}\|T_{J_{\ell}}f\|\cdot\|\mathcal{H}_{J_{\ell}}^{k}\|\lesssim
     \|f\|_{k,\mathcal{T}}\cdot\|h\|_{-k}\leq \|f\|_{k,\mathcal{T}},
  \end{align}   
  where (2) of Proposition \ref{P:antiderivative} was used as well as that $\|h\|_{-k}\leq 1$. Inequalities \eqref{E:estimatedrin} and \eqref{E:estimaterand}, together with Corollary \ref{C:dualityredux} imply that \eqref{E:main} holds for all $f\in\mathcal{C}^{\infty}(\overline{\Omega})$; removing the smoothness assumptions on $f$ can be done analogously to Lemma 4.2 in \cite{HerMcN10}. 
\end{proof}

\medskip

\subsection{Proof of Proposition \ref{P:antiderivative}}\label{SS:antiderivative}

The proof of Proposition \ref{P:antiderivative} is done in several steps -- the first one essentially consists of constructing the $k$-th antiderivative  of a given function along the integral curves of a normal vector field near the boundary as follows.

\begin{lemma}\label{L:antiderivative}
 Let $\Omega\Subset\mathbb{R}^{n}$, $n\geq 2$, be a smoothly bounded domain, and $N$ a smooth vector field on $\Omega$ which is transversal to $b\Omega$.  Then there exists a neighborhood $U$ of $b\Omega$ such that for a given function $a\in\mathcal{C}^{\infty}(U)$ and $k\in\mathbb{N}$
 \begin{align}\label{E:kantiderivative}
   g(x)=\left(\mathfrak{A}^{2}\circ(N^{2}-a\triangle) \right)^{k}[g](x)+\sum_{\ell=0}^{k-1}\left(\mathfrak{A}^{2}\circ(N^{2}-a\triangle) \right)^{\ell}\circ
   \mathfrak{A}^{2}[a\triangle g](x)
 \end{align}
 holds for all $g\in\mathcal{C}_{\overline{U}}^{\infty}(\Omega)$ and $x\in\Omega$.
\end{lemma}
\begin{proof}
   Let $U$ be the neighborhood of $b\Omega$ and $\varphi$ be flow map associated to $N$ defined on $(-1,1)\times U$ as described at the beginning of Section \ref{SS:Flowstuff}. For $k=1$ first  apply \eqref{E:FTC} to $g$ and then to $Ng$ to obtain
   \begin{align}\label{E:1antiderivative}
       g(x)=\mathfrak{A}[Ng](x)&=\mathfrak{A}^{2}[N^{2}g](x)\notag\\
       &=\mathfrak{A}^{2}\left[(N^{2}-a\triangle)g\right](x)+\mathfrak{A}^{2}[a\triangle g](x)
   \end{align}
   for $x\in\Omega$.
   The general case now follows by induction. That is, suppose \eqref{E:kantiderivative} holds for a given $k\in\mathbb{N}$ and replace the first $g$ on the right hand side of \eqref{E:kantiderivative} with the term on the right hand side of \eqref{E:1antiderivative}.
\end{proof}

To deal with terms of the form $\left(\mathfrak{A}^{2}\circ(N^{2}-a\triangle) \right)^{k}$ (for some function $a$) the ellipticity of the Laplace operator comes into play.
The latter property lets us, e.g., when $k=1$, replace one $N$-derivative in the $N^{2}$-term  by a linear combination of tangential derivatives, which then are commuted to the outside.

\begin{lemma}\label{L:A2Tcommute}
   Suppose the hypotheses of Lemma \ref{L:antiderivative} hold. Let $\mathcal{T}=\{T_{j}\}_{j=1}^{m}$ be a tangential spanning set for $\Omega$. Then there exist a neighborhood $U$ of $b\Omega$ and a function $a\in \mathcal{C}^{\infty}(U)$ such that for any $k\in\mathbb{N}$ there exist operators $G_{J_{\ell}}^{k}$, $\ell\in\{0,\dots,k\}$, belonging to the class of operators $\sum_{i=0}^{k-\ell}
   \sum_{\alpha\in\mathbb{N}_{0}^{2k},|\alpha|\geq i}\mathcal{A}_{\alpha,k+i}^{2k}$ such that
   \begin{align*}
     \left(\mathfrak{A}^{2}\circ(N^{2}-a\triangle) \right)^{k}=\sum_{\ell=0}^{k}\sum_{J_{\ell}}T_{J_{\ell}}\circ G_{J_{\ell}}^{k}.
   \end{align*}
\end{lemma}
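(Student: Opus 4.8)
The plan is to use the ellipticity of $\triangle$ to strip the pure second normal derivative from $N^{2}-a\triangle$, and then to peel off tangential factors one power at a time, pushing them to the far left with the commutator calculus of Lemma~\ref{L:ADcommutator}.

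First I would fix $a$ and record a basic decomposition. After shrinking $U$, the fields $T_{1},\dots,T_{m},N$ span $\mathbb{R}^{n}$ at every point of $U$, so $\partial_{x_{i}}=\sum_{j=1}^{m}b_{ij}T_{j}+c_{i}N$ with $b_{ij},c_{i}\in\mathcal{C}^{\infty}(U)$; transversality of $N$ forces $|c|^{2}=\sum_{i}c_{i}^{2}\neq0$ near $b\Omega$, so $a:=1/|c|^{2}$ is smooth on $U$. Passing to principal symbols in the frame $\{T_{j},N\}$, the coefficient of $N^{2}$ in $\triangle$ is exactly $|c|^{2}$ while it is $1$ in $N^{2}$; hence the $N^{2}$-part cancels in $N^{2}-a\triangle$ and every surviving second-order term carries a factor $T_{j}$. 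Moving those factors to the left (each commutation past a coefficient or a $\partial_{x_{i}}$ costs only a first-order remainder, whose smooth coefficients are absorbed into the $\mathcal{A}$-classes via composition with $\varphi$) produces first-order operators $M_{0},M_{1},\dots,M_{m}$ on $U$ with
\[
  N^{2}-a\triangle=\sum_{j=1}^{m}T_{j}\circ M_{j}+M_{0}.
\]
This is the rigorous form of ``replacing one $N$-derivative by tangential ones''.

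Next I would establish the case $k=1$ and then induct. Inserting the decomposition and writing $\mathfrak{A}^{2}T_{j}=T_{j}\mathfrak{A}^{2}+[\mathfrak{A}^{2},T_{j}]$ gives
\[
  \mathfrak{A}^{2}\circ(N^{2}-a\triangle)=\sum_{j=1}^{m}T_{j}\circ(\mathfrak{A}^{2}M_{j})+\Bigl(\textstyle\sum_{j=1}^{m}[\mathfrak{A}^{2},T_{j}]M_{j}+\mathfrak{A}^{2}M_{0}\Bigr).
\]
Since $\mathfrak{A}\in\mathcal{A}^{1}_{0,0}$, one has $\mathfrak{A}^{2}\in\mathcal{A}^{2}_{(0,0),0}$, so $\mathfrak{A}^{2}M_{j},\mathfrak{A}^{2}M_{0}\in\mathcal{A}^{2}_{(0,0),1}$, while Lemma~\ref{L:ADcommutator}(i) (with $|\beta|=1$) puts $[\mathfrak{A}^{2},T_{j}]M_{j}$ in $\mathcal{A}^{2}_{(0,0),1}+\sum_{p}\mathcal{A}^{2}_{e_{p},2}$; reading off $G^{1}_{(j)}=\mathfrak{A}^{2}M_{j}$ and $G^{1}_{0}$ (the bracketed term) settles $k=1$. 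For the inductive step I would write $(\mathfrak{A}^{2}(N^{2}-a\triangle))^{k+1}=(\mathfrak{A}^{2}(N^{2}-a\triangle))^{k}\circ\mathfrak{A}^{2}(N^{2}-a\triangle)$, apply the inductive hypothesis to the first factor and the base case to the second, and push each newly created $T_{J_{\ell'}}$ ($\ell'\le1$) to the left past the accumulated $G^{k}_{J_{\ell}}$ using $G^{k}_{J_{\ell}}T_{J_{\ell'}}=T_{J_{\ell'}}G^{k}_{J_{\ell}}+[G^{k}_{J_{\ell}},T_{J_{\ell'}}]$. Because only a single tangential field is moved at each stage, the bracket is controlled by Lemma~\ref{L:ADcommutator}(i) with $|\beta|=1$, and the pieces are recombined with the base-case factor through Lemma~\ref{L:ADcommutator}(ii); the concatenations $T_{J_{\ell}}T_{J_{\ell'}}=T_{J_{\ell+\ell'}}$ yield tangential orders $\ell+\ell'\le k+1$.

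The hard part will be the index bookkeeping, i.e. verifying that every operation lands in $\sum_{i=0}^{k-\ell}\sum_{|\alpha|\ge i}\mathcal{A}^{2k}_{\alpha,k+i}$. The accounting must show that composition raises the superscript by $2$ (to $2(k+1)$) and the derivative index by the base value $1$ (to $k+1$), and that each step either raises $\ell$ by one or raises the auxiliary index $i$ by one, while Lemma~\ref{L:ADcommutator}(i) raises $|\alpha|$ correspondingly, so that both constraints $|\alpha|\ge i$ and $i\le(k+1)-\ell$ survive. The decisive point is that commuting the new tangential field one step at a time — rather than a whole $T_{J_{\ell}}$ at once — keeps every commutator at order one; this is exactly what makes $i\le k-\ell$ propagate to $i\le(k+1)-\ell$ and prevents the bound $|\alpha|\ge i$ from being violated.
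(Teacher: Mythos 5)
Your proposal follows essentially the same route as the paper: the same choice $a^{-1}=\sum_k(a_k^0)^2$ coming from writing $\partial_{x_i}$ in terms of $\{T_j,N\}$, the same decomposition $N^2-a\triangle=\sum_j T_jY_j+Y_0$ with first-order $Y_j$, the identical base case $G_j^1=\mathfrak{A}^2Y_j$, and an induction that moves the newly created tangential factor to the left with Lemma \ref{L:ADcommutator}. The only difference is cosmetic --- you apply the base case to the right-hand factor and commute $T_{J_{\ell'}}$ past $G^k_{J_\ell}$ alone, whereas the paper commutes $T_j$ past $G^k_{J_\ell}\circ\mathfrak{A}^2$ directly; the resulting terms and index bookkeeping coincide.
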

\begin{proof}
Let $N$  be the smooth vector field on $\overline{\Omega}$ which is transversal to $b\Omega$.  Then there exists a neighborhood $U$ of $b\Omega$ such that the span of $T_{1}(x),\dots,T_{m}(x)$ and $N(x)$ is $\mathbb{R}^{n}$ for any $x\in U$. That is, for any $k\in\{1,\dots,n\}$ there exist functions $a_{k}^{j}\in\mathcal{C}^{\infty}(U)$, $j\in\{0,\dots,m\}$, such that
 \begin{align}\label{E:Ntransversal} 
  \frac{\partial}{\partial x_{k}}=a_{k}^{0}N+\sum_{j=1}^{m}a_{k}^{j}T_{j}.
\end{align}  
   The transversality of $N$ to $b\Omega$ implies that for each $x\in b\Omega$ there exists a $k\in\{1,\dots,n\}$ such that $a_{k}^{0}(x)\neq 0$. In fact, after possibly shrinking the neighborhood $U$, it may be assumed that $a^{-1}:=\sum_{k=1}^{n}(a_{k}^{0})^{2}>0$ on $U$.
Note that  \eqref{E:Ntransversal} implies that
\begin{align*}
  \frac{\partial^{2}}{\partial x_{k}^{2}}=(a_{k}^{0})^{2}N^{2}+\sum_{j=1}^{m}T_{j}X_{jk}+X_{0k},
\end{align*}
where the $X_{jk}$'s are smooth differential operators of order $1$. Summing over $k$ then leads to
\begin{align}\label{E:LaplaceN2}
  N^{2}-a\triangle=\sum_{j=1}^{m}T_{j}Y_{j}+Y_{0}\qquad\sjump\text{on}\sjump U
\end{align}
for some smooth differential operators, $Y_{j}$, $j\in\{0,\dots,m\}$, of order $1$.  After possibly shrinking the neighborhood $U$ of $b\Omega$, it may be assumed that the flow map $\varphi$ associated to $N$ is defined on $(-1,1)\times U$ as described in Section \ref{SS:Flowstuff}, and hence the setting portrayed therein applies here.

\medskip

Therefore, it needs to be proved that  
\begin{align}\label{E:A2Tcommute}
   \left(\mathfrak{A}^{2}\circ\Bigl(\sum_{j=1}^{m}T_{j}Y_{j}+Y_{0}\Bigr)\right)^{k}=\sum_{\ell=0}^{k}\sum_{J_{\ell}}T_{J_{\ell}}\circ G_{J_{\ell}}^{k}
\end{align}   
    holds for some  $G_{J_{\ell}}^{k}\in\sum_{i=0}^{k-\ell}
   \sum_{\alpha\in\mathbb{N}_{0}^{2k},|\alpha|\geq i}\mathcal{A}_{\alpha,k+i}^{2k}$ , which will be done by induction on $k\in\mathbb{N}$.
The case $k=1$ follows easily from commuting $\mathfrak{A}^{2}$ by $T_{j}$, $j\in\{1,\dots,m\}$. That is, setting
$G_{j}^{1}=\mathfrak{A}^{2}Y_{j}$ for $j\in\{1,\dots,m\}$,
and $G_{0}^{1}=\sum_{j=1}^{m}[\mathfrak{A}^{2},T_{j}]Y_{j}  + \mathfrak{A}^{2}Y_{0}$ yields \eqref{E:A2Tcommute} for $k=1$. Moreover, clearly both $G_{j}^{1}$,   $j\in\{1,\dots,m\}$, and $\mathfrak{A}^{2}Y_{0}$ belong to $\in\mathcal{A}^{2}_{0,1}$. Also, it follows from part (i) of  Lemma \ref{L:ADcommutator} that $$\sum_{j=1}^{m}[\mathfrak{A}^{2},T_{j}]Y_{j}\in\mathcal{A}_{0,1}^{2}+\sum_{i=1}^{2}\mathcal{A}^{2}_{e_{i},2},$$ which concludes the proof for $k=1$.

Next, let $k\in\mathbb{N}$ be given and suppose that \eqref{E:A2Tcommute} holds for some $G_{J_{\ell}}^{k}\in\sum_{i=0}^{k-\ell}
   \sum_{\alpha\in\mathbb{N}_{0}^{2k}|\alpha|\geq i}\mathcal{A}_{\alpha,k+i}^{2k}$. Then commuting $G_{J_{\ell}}^{k}\circ\mathfrak{A}^{2}$ by $T_{j}$ after using the induction hypothesis yields
 \begin{align*}
  & \left(\mathfrak{A}^{2}\circ\Bigl(\sum_{j=1}^{m}T_{j}Y_{j}+Y_{0} \Bigr) \right)^{k+1}
   =\sum_{\ell=0}^{k}\sum_{J_{\ell}}T_{J_{\ell}}\circ G_{J_{\ell}}^{k}\circ\mathfrak{A}^{2}\circ\Bigl(\sum_{j=1}^{m}T_{j}Y_{j}+Y_{0} \Bigr)\\
  &=\sum_{\ell=0}^{k}\sum_{J_{\ell}}\left\{
  \sum_{j=1}^{m}\left(T_{J_{\ell}}\circ T_{j}(G_{J_{\ell}}^{k}\circ\mathfrak{A}^{2}\circ Y_{j})+T_{J_{\ell}}\circ[G_{J_{\ell}}^{k}\circ\mathfrak{A}^{2},T_{j}]Y_{j}
  \right)+T_{J_{\ell}}\circ G_{J_{\ell}}^{k}\circ\mathfrak{A}^{2}\circ Y_{0}
  \right\}.
 \end{align*}  
 To show that the three operators, $G_{J_{\ell}}^{k}\circ\mathfrak{A}^{2}\circ Y_{j}$, $[G_{J_{\ell}}^{k}\circ\mathfrak{A}^{2},T_{j}]Y_{j}$ and $G_{J_{\ell}}^{k}\circ\mathfrak{A}^{2}\circ Y_{0}$, are in the claimed spaces, note first that part (ii) of Lemma \ref{L:ADcommutator} yields
 $$G_{J_{\ell}}^{k}\circ\mathfrak{A}^{2}\circ Y_{j}\in\sum_{i=0}^{(k+1)-(\ell+1)}\sum_{\alpha\in\mathbb{N}_{0}^{2k},|\alpha|\geq i}\mathcal{A}_{(\alpha,0,0),k+1+i}^{2k+2}\,.$$
 Similarly, one obtains from part (ii) of Lemma \ref{L:ADcommutator} that
 $$G_{J_{\ell}}^{k}\circ\mathfrak{A}^{2}\circ Y_{0}\in \sum_{i=0}^{k-\ell}
\sum_{\alpha\in\mathbb{N}_{0}^{2k},|\alpha|\geq i}\mathcal{A}^{2k+2}_{(\alpha,0,0),k+1+i}\subset\sum_{i=0}^{k+1-\ell}
\sum_{\alpha\in\mathbb{N}_{0}^{2k+2},|\alpha|\geq i}\mathcal{A}^{2k+2}_{\alpha,k+1+i}.$$
 Lastly, for the second term both parts of Lemma \ref{L:ADcommutator} need to be used to obtain
 \begin{align*}
 [G_{J_{\ell}}^{k}\circ\mathfrak{A}^{2},T_{j}]Y_{j}\in \sum_{i=0}^{k-\ell}\sum_{\alpha\in\mathbb{N}_{0}^{2k},|\alpha|\geq i}\mathcal{A}_{(\alpha,0,0),k+1+i}^{2k+2}+&\sum_{i=0}^{k-\ell}\sum_{\alpha\in\mathbb{N}_{0}^{2k},|\alpha|\geq i}\sum_{j=1}^{2k+2}\mathcal{A}_{(\alpha,0,0)+e_{j},k+2+i}^{2k+2}\\
 &\subset\sum_{i=0}^{k+1-\ell}\sum_{\alpha\in\mathbb{N}_{0}^{2k+2},|\alpha|\geq i}\mathcal{A}^{2k+2}_{\alpha,k+1+i},
 \end{align*}
 which concludes the proof.
 \end{proof}
 
 \medskip

Having Lemmas \ref{L:antiderivative} and \ref{L:A2Tcommute} in hand, Proposition \ref{P:antiderivative} may now be proven.
\begin{proof}[Proof of Proposition \ref{P:antiderivative}]
Let $U$ be the neighborhood of $b\Omega$ which is provided by Lemma~\ref{L:A2Tcommute}. Let $\zeta\in\mathcal{C}_{0}^{\infty}(\overline{\Omega}\cap U)$ be a function which equals $1$ in a neighborhood $V\Subset U$ of $b\Omega$. Let $h\in h^{0}(\Omega)$, then it follows from Lemmas \ref{L:antiderivative} and \ref{L:A2Tcommute} that
\begin{align*}
  \zeta h=\sum_{\ell=0}^{k}\sum_{J_{\ell}}T_{J_{\ell}}\circ G_{J_{\ell}}^{k}(\zeta h)+\sum_{\ell=0}^{k-1}T_{J_{\ell}}\circ G_{J_{\ell}}^{k-1}\circ\mathfrak{A}^{2}[a\triangle(\zeta h)],
\end{align*}
where $G_{J_{\ell}}^{j}\in\sum_{i=0}^{j-\ell}\sum_{|\alpha|\geq i}\mathcal{A}_{\alpha,j+i}^{2j}$ for $j=k,k-1$. 
Thus, it remains to be shown that both $G_{J_{\ell}}^{k}(\zeta h) $ and $G_{J_{\ell}}^{k-1}\circ\mathfrak{A}^{2}[a\triangle(\zeta h)]$ belong to $H^{k}(\Omega)$ as well as that their $L^{2}$-norms on $\Omega$ are bounded by $\|h\|_{-k}$ (up to a multiplicative, uniform constant). Note first that \eqref{L:AS} implies that
\begin{align*}
  \left\|G_{J_{\ell}}^{k}(\zeta h)\right\|\lesssim\sum_{i=0}^{k-\ell}\sum_{|\alpha|\geq i}\sum_{|\beta|\leq i+k}
  \left\|t_{x}^{2k+|\alpha|}D^{\beta}(\zeta h)\right\|\lesssim \|h\|_{-k},
\end{align*}
where the last estimate follows from \eqref{E:harmnegnorm}. Similarly, using additionally that $\triangle(\zeta h)=(\triangle\zeta)\cdot h+\langle\nabla\zeta,\nabla h\rangle$, it follows that
\begin{align*}
  \left\|G_{J_{\ell}}^{k-1}\circ\mathfrak{A}^{2}[a\triangle(\zeta h)] \right\|&\lesssim
  \sum_{i=0}^{k-1-\ell}\sum_{|\alpha|\geq i}\sum_{|\beta|\leq i+k-1}\left\|t_{x}^{2k-2+|\alpha|}D^{\beta} \left(\mathfrak{A}^{2}[a\triangle(\zeta h)] \right)\right\|\\
  &\lesssim
  \sum_{i=0}^{k-1-\ell}\sum_{|\alpha|\geq i}\sum_{|\beta|\leq i+k-1}\left\|t_{x}^{2k+|\alpha|}D^{\beta+1} \left(\zeta_{0} h\right)\right\|
\end{align*}
for some $\zeta_{0}\in\mathcal{C}_{0}(\Omega\cap U)$. Hence, with \eqref{E:harmnegnorm} it follows that
\begin{align*}
   \left\|G_{J_{\ell}}^{k-1}\circ\mathfrak{A}^{2}[a\triangle(\zeta h)] \right\|\lesssim \|h\|_{-k}.
\end{align*}
Using analogous arguments and part (ii) of Lemma \ref{L:ADcommutator} also implies that the Sobolev-$k$-norms of $G_{J_{\ell}}^{k}(\zeta h) $ and $G_{J_{\ell}}^{k-1}\circ\mathfrak{A}^{2}[a\triangle(\zeta h)]$ are bounded by the $L^{2}$-norm of $h$ (up to a multiplicative constant). This concludes the proof of Proposition \ref{P:antiderivative}.

 \end{proof}


\section{The harmonic Bergman projection on the unit ball}\label{S:Ball} 
\subsection{Theorem \ref{T:main} on the unit ball}
Let $n\geq 2$. Consider the unit ball $$\mathbb{B}^n=\Bigl\{x\in\mathbb{R}^n: r(x)=\sum_{j=1}^{n}x_j^2-1<0\Bigr\}.$$   The harmonic Bergman projection $P$ on $L^2(\mathbb{B}^n)$ is given by
\begin{align}\label{E:harmB1}
  (Pf)(x)=\int_{\mathbb{B}^n}P(x,y)f(y)\;dV(y)\qquad\sjump\forall\sjump x\in\mathbb{B}^{n},
\end{align}  
where
\begin{align}\label{E:harmB2}
P(x,y)=\frac{1}{nV(\mathbb{B}^n)\left(1-2\langle x,y\rangle+|x|^2|y|^2\right)^{n/2}}
\left(\frac{n(1-|x|^2|y|^2)^2}{1-2\langle x,y\rangle+|x|^2|y|^2}-4|x|^2|y|^2 \right),
\end{align}
see, e.g., Theorem 8.13 in \cite{Axleretal}, or cf. to Section 2 in \cite{Stroethoff98} for a different derivation of \eqref{E:harmB2}.

\medskip

For $1\leq i<j\leq n$, define smooth vector fields $$T_x^{i,j}=x_i\frac{\partial}{\partial x_j}-x_j\frac{\partial}{\partial x_i}.$$ Note that each $T_x^{i,j}$ is tangent to $b\mathbb{B}^n$ at $x\in b\mathbb{B}^n$ since
$T_x^{i,j}(r(x))=0$. In fact, the span of $T_{x}^{i,j}$, $1\leq i<j\leq n$, is the tangent space to $b\mathbb{B}^n$ at $x\in b\mathbb{B}^n$. Hence, 
$\mathcal{T}=\{T_x^{i,j}\}$, $1\leq i<j\leq n$, is a spanning set for $\mathbb{B}^n$. 
Furthermore, 
\begin{align*}
  T_{x}^{i,j}(\langle x, y\rangle)=x_{i}y_{j}-y_{i}x_{j}=-T_{y}^{i,j}(\langle x,y\rangle)
\end{align*}
and  $T_{x}^{i,j}(|x|^2|y|^2)=0=-T_{y}^{i,j}(|x|^2|y|^2)$, so that
\begin{align*}  
  T_{x}^{i,j}\left(P(x,y)\right)=-T_{y}^{i,j}\left(P(x,y)\right)
\end{align*}
holds. Hence, for $f\in\mathcal{C}^{\infty}(\overline{\mathbb{B}^n})$
\begin{align*}
  T_{x}^{i,j}(Pf)(x)&=-\int_{\mathbb{B}^n}T_{y}^{i,j}\left(P(x,y)\right)f(y)\;dV(y)\\
  &=\int_{\mathbb{B}^n}P(x,y)T_{y}^{i,j}\left(f(y)\right)\;dV(y)=P(T^{i,j}f)(x),
\end{align*}
where the last line follows from the fact that the $L^2(\mathbb{B}^{n})$-adjoint of $-T_{y}^{i,j}$ is
$T_{y}^{i,j}$. Thus $[T^{i,j},P]=0$ on $\mathcal{C}^{\infty}(\overline{\mathbb{B}^n})$ for all $1\leq i<j\leq n$. It then follows from 
\eqref{E:harmtang} and the $L^{2}$-boundedness of $P$ that  
\begin{align*}
\|Pf\|_{1}&\lesssim\|Pf\|_{1,\mathcal{T}}+\|f\|\\
&=\sum_{1\leq i<j\leq n}\|[T^{i,j},P]f\|+\|f\|_{1,\mathcal{T}}
\lesssim\|f\|_{1,\mathcal{T}}
\end{align*}
holds for all $f\in H_{\mathcal{T}}^{1}(\mathbb{B}^{n})$. To obtain Theorem \ref{T:main} on the unit ball for  general $k$, repeat the above argument.

\medskip

\subsection{Non-smoothing in tangential direction on the unit disk}
Consider the unit disk $\mathbb{D}=\mathbb{B}^{2}$ and the vector field $N=x_1\frac{\partial}{\partial x_1}+x_2\frac{\partial}{\partial x_2}$, which is normal to $b\mathbb{D}$ at each boundary point $(x_1,x_2)$. To show that the right hand side of \eqref{E:main} may not be substituted by a Sobolev norm only measuring normal derivatives, a sequence of functions $f_k\in L^2(\mathbb{D})$ is constructed, which satisfies
\begin{itemize}
  \item[(i)] $Nf_k\in L^2(\mathbb{D})$ for all $k\in\mathbb{N}$,
  \item[(ii)] there exists no constant $C>0$ such that
  \begin{align*}
    \left\|Pf_{k}\right\|_{1}\leq C\left(\left\|Nf_k\right\|+\|f_k\| \right)\qquad\sjump\forall\sjump k\in\mathbb{N}.
  \end{align*}
\end{itemize}
Define $g_{k}(x)=\frac{(x_1+ix_2)^{k+1}}{|x|^k}$ for $x\in\mathbb{D}\setminus\{0\}$ and $g_{k}(0)=0$ for all $k\in\mathbb{N}$. Then set $f_{k}(x)=\text{Re}(g_{k}(x))$ and note that $f_k\in L^2(\mathbb{D})$ since
\begin{align*}
  \left\|f_k\right\|^2=\int_{0}^{2\pi}\int_{0}^{1}\left|\text{Re}(e^{i(k+1)\theta})\right|^2 r^3\;dr\;d\theta
  =\frac{1}{4}\int_{0}^{2\pi}\cos^{2}\left((k+1)\theta \right)\;d\theta=\frac{\pi}{4}.
\end{align*}
To see that (i) holds, observe that in polar coordinates $(r,\theta)$, $N$ corresponds to the vector field $r\frac{\partial}{\partial r}$ and $f_k$ to the function $r\cdot\cos((k+1)\theta)$, so that $Nf_k=f_k\in L^2(\mathbb{D})$ with uniform norm $\sqrt{\pi}/2$. 

To prove (ii), it suffices to show that there is no constant $C>0$
such that $\|TPf_k\|_1\leq C$ for $T=T^{1,2}=x_1\frac{\partial}{\partial x_2}-x_2\frac{\partial}{\partial x_1}$. It follows from the first part of this section that $T(Pf_k)=P(Tf_k)$. Moreover, that $Tf_{k}=-(k+1)f_{k}$ is easily seen after noticing that $T$ is $-\frac{\partial}{\partial\theta}$ in polar coordinates. Hence (ii) would follow from the sequence $\{(k+1)Pf_k\}_{k}$ not being uniformly bounded in $L^2(\mathbb{D})$.

To compute $Pf_k$ it is more convenient to use a representation of the harmonic Bergman projection in terms of the analytic Bergman projection rather than \eqref{E:harmB1} and \eqref{E:harmB2}. For that write $z=x_1+ix_2$ and $w=y_1+iy_2$. Then, using the identity $1-2\langle x,y\rangle+|x|^2|y|^2=|1-z\overline{w}|^2$, it follows from a straightforward computation that  \eqref{E:harmB2} becomes
\begin{align*}
  P(z,w)=\frac{1}{\pi}\left(
  \frac{1}{(1-z\bar{w})^2}+\frac{1}{(1-\bar{z}w)^2}-1
  \right).
\end{align*}
Note that $\frac{1}{\pi}\frac{1}{(1-z\bar{w})^{2}}$ is the kernel of the analytic Bergman projection $B$ on the unit disk, see, e.g., Proposition 1.4.24 in \cite{Krantz_scv_book}. Thus, for  real-valued functions $f\in L^2(\mathbb{D})$ it follows that
\begin{align}\label{E:harmB3}
  (Pf)(z)=Bf(z)+\overline{Bf(z)}-Bf(0).
\end{align}
Hence, the $L^2(\mathbb{D})$- norm of $2\text{Re}(Bf_k)(z)$ as well as $Bf_{k}(0)$ need to be found. First compute 
\begin{align*}
  (Bg_k)(z)=\frac{1}{\pi}\int_{\mathbb{D}}\frac{w^{k+1}/|w|^k}{(1-z\bar{w})^2}\;dV(w)
  &=\frac{1}{\pi}\sum_{m=0}^{\infty}(m+1)\int_{\mathbb{D}}\frac{w^{k+1}}{|w|^k}z^m\bar{w}^m\;dV(w)\\
  &=2\frac{k+2}{k+4}z^{k+1},
\end{align*}
where polar coordinates were used to compute the integral. Then note that an analog computation yields that $B\overline{g_{k}}$ is zero. Thus, $$Pf_k(z)=4\frac{k+2}{k+4}\text{Re}(z^{k+1}).$$ Since $\|\text{Re}(z^{k+1})\|=\frac{\sqrt{\pi}}{\sqrt{2k+4}}$, it may be concluded that
\begin{align*}
  \left\|Pf_k\right\|=2\sqrt{2\pi}\frac{\sqrt{k+2}}{(k+4)},
\end{align*}
and hence $\|TPf_{k}\|=(k+1)\|Pf_{k}\|$ is not uniformly bounded.

\medskip

\begin{remark}
 It was shown in Section 5 of \cite{HerMcN10} that the analytic Bergman projection $B$ does not exhibit smoothing in the tangent direction on the upper half plane $\mathbb{H}$. Since $B\circ P=B$, it then follows that such smoothing cannot hold for $P$ on $\mathbb{H}$ either. Nevertheless, the above example is included here as it is more feasible and illustrative than the one given in \cite{HerMcN10}.
\end{remark}

\bibliographystyle{plain}
\bibliography{Her12}
\end{document}